\newcommand{\zz}{{\mathbb Z}}
\newcommand{\nn}{{\mathbb N}}
\newcommand{\rr}{{\mathbb R}}
\newcommand{\beq}{\begin{eqnarray*}}
\newcommand{\feq}{\end{eqnarray*}}
\newcommand{\beqn}{\begin{eqnarray}}
\newcommand{\feqn}{\end{eqnarray}}
\newtheorem{theorem}{Theorem}[section]
\newtheorem{lemma}[theorem]{Lemma}
\newtheorem{corollary}[theorem]{Corollary}
\newtheorem{proposition}[theorem]{Proposition}
\theoremstyle{definition}
\theoremstyle{remark}
\numberwithin{equation}{section}
\begin{document}

\title{ON TIGHTNESS OF THE SKEW RANDOM WALKS}

\author{YOUNGSOO SEOL}
\address{Department of Mathematics, Iowa State University, Ames, Iowa 50010}
\email{yseol@iastate.edu}


\subjclass[2000]{Primary 60F17,60J60; Secondary 60G50,60K99}



\keywords{$\alpha$-skew random walk, $\alpha$-skew Brownian motions, tightness}

\begin{abstract}
The primary purpose of this article is to prove a tightness of $\alpha$-skew random walks.
The tightness result implies, in particular, that the $\alpha$-skew Brownian motion can be constructed as
the scaling limit of such random walks. Our proof of tightness is based on a forth-order moment method.
\end{abstract}

\maketitle

\section{Introduction and statement of the main result}
\label{intro}
Skew Brownian motion was introduced by It\^{o} and Mckean \cite{itomckean1} to furnish
a construction of certain stochastic processes related to Feller's classification of
second order differential operators associated with diffusion processes
(see also Section~4.2 in \cite{itomckean2}).
For $\alpha \in (0,1),$ the $\alpha$-skew Brownian motion is defined as a one-dimensional Markov process
with the same transition mechanism as of the usual Brownian motion, with the only exception that the excursions away from
zero are assigned a positive sign with probability $\alpha$ and a negative sign
with probability $1-\alpha.$ The signs form an i.i.d. sequence and are chosen independently of the past history
of the process. If $\alpha=1/2,$ the process is the usual Brownian motion.
\par
Formally, the $\alpha$-skew random walk on $\zz$ starting at $0$ is defined as the birth-death Markov chain
$S^{(\alpha)}=\{\,S_{k}^{(\alpha)} ; k \geq 0\,\}$ with $S_{0}^{\alpha}=0$ and one-step transition probabilities given by
\beq
P\bigl(S_{k+1}^{(\alpha)}=m+1|S_{k}^{(\alpha)}=m\bigr)
&=&\left\{
\begin{array}{cl}
\alpha&\mbox{if}~m=0
\\
\frac{1}{2}&\mbox{otherwise}. \\
\end{array}
\right.
\\
\empty
\\
P\bigl(S_{k+1}^{(\alpha)}=m-1|S_{k}^{(\alpha)}=m\bigr)&=&
\left\{
\begin{array}{cl}
1-\alpha&\mbox{if}~m=0
\\
\frac{1}{2}&\mbox{otherwise}.
\end{array}
\right.
\feq
In the special case $\alpha=\frac{1}{2}$, $S^{(\frac{1}{2})}$ is a simple symmetric random walk on $\zz.$
Notice that when $\alpha\neq 1/2,$ the jumps (in general, increments) of the random walk are not independent.
\par
Harrison and Shepp \cite{harrisonshepp1} asserted (without proof) that the functional central
limit theorem for reflecting Brownian motion can be used to construct skew Brownian motion as the
limiting process of a suitably modified symmetric random walk on the integer lattice.
This result has served as a foundation for numerical algorithms tracking moving particle in a
highly heterogeneous porous media; see, for instance, \cite{hoteit1,lejay1,Appuhamillage2,saxton1}.
In \cite{lejay1} it was suggested that tightness could be obtained based on second moments, however this is
not possible even in the case of simple symmetric random walk. The lack of statistical independence of the
increments make a fourth moment proof all the more challenging.
Although proofs of FCLTs in more general frameworks have subsequently been obtained by other methods, e.g., by Skorokhod
embedding in \cite{Cherny1}, a self-contained simple proof of tightness for simple skew random walk has not been available
in the literature.
\par
The main goal of this paper is to prove the following result.
Let $C(\rr_+,\rr)$ be the space of continuous functions from $\rr_+=[0,\infty)$ into $\rr,$
equipped with the topology of uniform convergence on compact sets.
For $n\in \nn$, let $X_n^{(\alpha)} \in C(\rr_+,\rr)$ denote the following linear interpolation of $S^{(\alpha)}_{[nt]}$:
\beq
X_n^{(\alpha)}(t) = \frac{1}{\sqrt{n}}\Bigl(S^{(\alpha)}_{[nt]} + (nt-[nt]) \cdot S^{(\alpha)}_{[nt]+1}\Bigr).
\feq
Here and henceforth $[x]$ denotes the integer part of a real number $x.$
\begin{theorem}
\label{main-thm}
For any $\alpha \in (0,1),$ there exists a constant $C_\alpha>0,$ such that the inequality
\beq
E\bigl|X_n^{(\alpha)}(t)-X_n^{(\alpha)}(s)\bigr|^{4} \leq C_\alpha |s-t|^2,
\feq
holds uniformly for all $s,t>0,$ and $n\in \nn.$
\end{theorem}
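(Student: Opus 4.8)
The plan is to first reduce the assertion to a fourth-moment estimate for the underlying walk at integer times, and then to isolate the effect of the asymmetry at the origin through a Doob decomposition. Since $X_n^{(\alpha)}$ is piecewise linear with slopes $\pm\sqrt n$ on each interval $[k/n,(k+1)/n]$, it is Lipschitz with constant $\sqrt n$; hence whenever $\abs{t-s}\le 1/n$ one has $\abs{X_n^{(\alpha)}(t)-X_n^{(\alpha)}(s)}\le \sqrt n\,\abs{t-s}$, so that $\ee\abs{X_n^{(\alpha)}(t)-X_n^{(\alpha)}(s)}^4\le n^2\abs{t-s}^4\le \abs{t-s}^2$. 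When $\abs{t-s}> 1/n$, write $l=[ns]$ and $m=[nt]$; then $m-l\le n\abs{t-s}+1\le 2n\abs{t-s}$, and since $X_n^{(\alpha)}(t)-X_n^{(\alpha)}(s)$ equals $n^{-1/2}(S_m^{(\alpha)}-S_l^{(\alpha)})$ plus two boundary terms each of modulus at most $n^{-1/2}$, convexity of $x\mapsto x^4$ together with $\abs{t-s}^2>n^{-2}$ reduces the theorem to proving
\beq
\ee\bigl|S_m^{(\alpha)}-S_l^{(\alpha)}\bigr|^4 &\le& C_\alpha (m-l)^2.
\feq

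To handle the right-hand side I would use the Doob decomposition $S_k^{(\alpha)}=M_k+A_k$. Writing $\xi_k=S_k^{(\alpha)}-S_{k-1}^{(\alpha)}$ and $\mathcal F_k=\sigma(S_0^{(\alpha)},\dots,S_k^{(\alpha)})$, the defining transition probabilities give $\ee[\xi_k\mid\mathcal F_{k-1}]=(2\alpha-1)\mathbf{1}_{\{S_{k-1}^{(\alpha)}=0\}}$, so the predictable drift is $A_k=(2\alpha-1)\sum_{j=0}^{k-1}\mathbf{1}_{\{S_j^{(\alpha)}=0\}}$ and $M_k=\sum_{i=1}^k d_i$ is a martingale with bounded differences $d_i=\xi_i-(2\alpha-1)\mathbf{1}_{\{S_{i-1}^{(\alpha)}=0\}}$, $\abs{d_i}\le 2$. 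By convexity it then suffices to bound $\ee\abs{M_m-M_l}^4$ and $\ee\abs{A_m-A_l}^4$ separately by a constant multiple of $(m-l)^2$.

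For the martingale part I would run an elementary $L^4$ recursion on the partial sums $P_k=M_k-M_l$. Expanding $\ee[P_k^4]=\ee[(P_{k-1}+d_k)^4]$, the term $\ee[P_{k-1}^3 d_k]$ vanishes because $\ee[d_k\mid\mathcal F_{k-1}]=0$; the term $\ee[P_{k-1}^2 d_k^2]=\ee[P_{k-1}^2\,\ee(d_k^2\mid\mathcal F_{k-1})]$ is at most $\ee[P_{k-1}^2]\le k-l$, using the easy bound $\ee(P_k^2)\le k-l$ and $\ee(d_k^2\mid\mathcal F_{k-1})\le 1$; the cubic term $\ee[P_{k-1}d_k^3]$ is controlled by Cauchy--Schwarz and the boundedness of $\ee(d_k^3\mid\mathcal F_{k-1})$ by $C(\ee P_{k-1}^2)^{1/2}\le C(m-l)^{1/2}$; and $\ee[d_k^4]\le 16$. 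Summing these increments over $l<k\le m$ yields $\ee\abs{M_m-M_l}^4\le C(m-l)^2$.

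The remaining and most delicate term is the drift, since $\ee\abs{A_m-A_l}^4=(2\alpha-1)^4\,\ee\bigl[L_{l,m}^4\bigr]$ with $L_{l,m}=\sum_{j=l}^{m-1}\mathbf{1}_{\{S_j^{(\alpha)}=0\}}$ the number of visits to the origin in the window $[l,m)$; this is exactly where the dependence of the increments is concentrated. The key observation is that, although the \emph{signs} of the excursions of $S^{(\alpha)}$ are biased, their \emph{durations} are not: from the origin the walk steps to $\pm1$ and then evolves symmetrically until its next return, so each first-return time has the same law as for the simple symmetric walk. Consequently the return probabilities satisfy $P(S_a^{(\alpha)}=0\mid S_0^{(\alpha)}=0)=P(S_a=0\mid S_0=0)\le C(a\vee 1)^{-1/2}$. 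Ordering the four indices and applying the Markov property, $\ee[L_{l,m}^4]$ is bounded by $4!$ times a sum of products $\prod_i P(S_{a_i}^{(\alpha)}=0\mid S_0^{(\alpha)}=0)$ over $a_1+\cdots+a_4\le m-l$; relaxing the constraint and using $\sum_{a=0}^{N}(a\vee 1)^{-1/2}\le C\sqrt N$ gives $\ee[L_{l,m}^4]\le C(m-l)^2$. Combining the two bounds yields the displayed inequality, and hence the theorem. I expect the main obstacle to be precisely this local-time estimate: one must both justify the excursion-duration comparison with the symmetric walk and carry out the fourth-moment return-probability summation, since the naive $O((m-l)^3)$ count of index tuples must be collapsed to $O((m-l)^2)$ through the $(a\vee1)^{-1/2}$ decay of the return probabilities.
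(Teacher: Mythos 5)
Your proposal is correct, and it follows a genuinely different route from the paper. The paper expands $E\bigl|\sum_{i}(S_{i+1}^{(\alpha)}-S_i^{(\alpha)})\bigr|^4$ directly into diagonal and cross terms, computes the cross terms exactly via the Markov property and the conditional mean $(2\alpha-1)\mathbf{1}_{\{S_i^{(\alpha)}=0\}}$ (reducing them to products $g(i_1)g(i_2-i_1)\cdots$ of return probabilities), and then bounds the resulting convolution sums $\sum_j g\ast g(j)\sim m$ and $\sum_j g^{\ast 4}(j)\sim m^2$ by Karamata's Tauberian theorem applied to the generating function $\widetilde g(t)=(1-t^2)^{-1/2}$. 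You instead perform the Doob decomposition $S_k^{(\alpha)}=M_k+(2\alpha-1)\sum_{j<k}\mathbf{1}_{\{S_j^{(\alpha)}=0\}}$, dispose of the bounded-difference martingale by a standard $L^4$ recursion, and isolate all of the dependence in the fourth moment of the local time at the origin, which you control by the elementary Stirling bound $g(a)\le C(a\vee 1)^{-1/2}$ rather than by Tauberian theory. The two arguments are controlled by the same underlying quantity --- iterated sums of return probabilities, i.e.\ moments of the local time --- but your decomposition is more structural and more robust (it would survive, e.g., bounded perturbations of the transition mechanism away from $0$), and it replaces the Tauberian machinery with a one-line summation $\sum_{a\le N}a^{-1/2}\le C\sqrt N$; the paper's computation is more explicit and yields the exact asymptotic constants for the convolution sums. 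Your justification of $P(S_a^{(\alpha)}=0\mid S_0^{(\alpha)}=0)=P(S_a=0)$ via the excursion-duration symmetry is precisely Propositions~\ref{prop:samdis} and~\ref{prop23} of the paper, and your reduction from arbitrary $s,t$ to grid points (splitting on $|t-s|\le 1/n$ versus $|t-s|>1/n$ and using the $\sqrt n$-Lipschitz property of the interpolation) is the same standard step the paper delegates to the literature; both are sound.
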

The results stated above implies (see, for instance, \cite[p.~98]{bhattwaymire}):
\begin{corollary}
The family of processes $X^{(\alpha)}_n,$ $n\in \nn,$ is tight in $C(\rr_+,\rr).$
\end{corollary}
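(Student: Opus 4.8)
The plan is to use the linear interpolation to reduce the statement to a fourth-moment bound for the chain itself at lattice times, namely
\[
E\bigl|S^{(\alpha)}_k - S^{(\alpha)}_j\bigr|^4 \le C_\alpha (k-j)^2, \qquad 0 \le j \le k ,
\]
and then to prove this by isolating the only genuine source of dependence, the drift the walk feels at the origin. For the reduction, set $j=[ns]$, $k=[nt]$ and split $X_n^{(\alpha)}(t)-X_n^{(\alpha)}(s)$ into the lattice increment $\frac{1}{\sqrt n}\bigl(S^{(\alpha)}_k-S^{(\alpha)}_{j+1}\bigr)$ and the two boundary fragments coming from the partial sub-intervals at $s$ and at $t$. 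Since within a single sub-interval the interpolation moves by $\sqrt n\,|t-s|$ times a $\pm1$ increment, each fragment contributes at most $\min\{n^2|t-s|^4,\,n^{-2}\}\le|t-s|^2$ to the fourth moment; and when $k-j\ge 2$ one has $k-j\le 2n|t-s|$, so the lattice term contributes $O(|t-s|^2)$ as well, while for $k-j\le 1$ the lattice term vanishes. Combining these via $(a+b+c)^4\le 27(a^4+b^4+c^4)$ reduces the theorem to the displayed lattice bound.

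For the lattice bound I would use the Doob decomposition $S^{(\alpha)}_k = M_k + A_k$ into a martingale and a predictable drift. Since the one-step mean increment vanishes off the origin and equals $2\alpha-1$ at the origin, $A_k=(2\alpha-1)L_k$ with $L_k=\#\{0\le i<k:S^{(\alpha)}_i=0\}$ the number of visits to $0$, while $M_k=S^{(\alpha)}_k-A_k$ is a martingale whose increments are bounded by $1+|2\alpha-1|\le 2$. Then $E|S^{(\alpha)}_k-S^{(\alpha)}_j|^4\le 8\bigl(E|M_k-M_j|^4+(2\alpha-1)^4 E|L_k-L_j|^4\bigr)$, and it suffices to bound each term by a multiple of $(k-j)^2$. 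The martingale term is exactly where the fourth-moment method is carried out, with the martingale property replacing the independence of increments that is unavailable here: expanding $E|M_k-M_j|^4=\sum E[d_{i_1}d_{i_2}d_{i_3}d_{i_4}]$ over the differences $d_i=M_i-M_{i-1}$, every term in which the largest index appears exactly once vanishes upon conditioning, leaving only $\sum_i E[d_i^4]$ and $6\sum_{i<j}E[d_i^2 d_j^2]$, which are bounded by $16(k-j)+48\binom{k-j}{2}\le C(k-j)^2$ because $|d_i|\le 2$.

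The drift term is the main obstacle, as it requires controlling the fourth moment of the number of visits to $0$ in a window of length $m=k-j$. Here I would exploit two structural facts. First, the excursion lengths of the $\alpha$-skew walk away from $0$ have the same law as for the simple symmetric walk (only the signs of the excursions are biased), so the visit process to $0$ is distributed exactly as for the symmetric walk. Second, by the strong Markov property $L_k-L_j$ is stochastically dominated by $V_m$, the number of visits to $0$ up to time $m$ for the symmetric walk started at $0$. It then remains to prove $E[V_m^4]\le C m^2$. Writing $V_m=\sum_{i=0}^m \mathbf 1\{S_i=0\}$ and using the Markov property, $E[V_m^4]\le 4!\sum_{i_1\le\cdots\le i_4\le m} u_{i_1}u_{i_2-i_1}u_{i_3-i_2}u_{i_4-i_3}$, where $u_{2l}=\binom{2l}{l}2^{-2l}\le C l^{-1/2}$ and $u_{\mathrm{odd}}=0$; reparametrising by the gaps and enlarging the simplex $\{\text{gaps sum}\le m\}$ to the box $\{\text{each gap}\le m\}$ makes the sum factorise, so it is bounded by $\bigl(\sum_{a=0}^m u_a\bigr)^4\le(C\sqrt m)^4=C m^2$. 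Assembling the martingale and drift estimates gives the lattice bound, and the reduction of the first paragraph then yields the theorem.
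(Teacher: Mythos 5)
Your overall strategy --- reduce to lattice times, split $S^{(\alpha)}$ via the Doob decomposition into a bounded-increment martingale plus $(2\alpha-1)$ times the local time at the origin, and control $E[V_m^4]$ by the gap-factorization bound $\bigl(\sum_{a\le m}u_a\bigr)^4=O(m^2)$ --- is sound and genuinely different from the paper, which expands $E\bigl|S_k^{(\alpha)}-S_j^{(\alpha)}\bigr|^4$ directly and estimates the resulting mixed terms through partial sums of $g\ast g$ and $g\ast g\ast g\ast g$ via Karamata's Tauberian theorem. However, there is a genuine gap in your treatment of the martingale term. Your vanishing criterion (largest index occurring exactly once) is correct, but your list of surviving terms is not: besides $\sum_iE[d_i^4]$ and $6\sum_{i<j}E[d_i^2d_j^2]$, the expansion also leaves $4\sum_{i<j}E[d_id_j^3]$ (harmless: only $O(m^2)$ terms, each bounded by a constant) and, crucially, the terms
\[
12\sum_{i_1<i_2<i_3}E\bigl[d_{i_1}d_{i_2}d_{i_3}^2\bigr],
\]
in which the largest index appears twice. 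These do not vanish, because $E[d_{i_3}^2\mid\mathcal{F}_{i_3-1}]=1-(2\alpha-1)^2\mathbf{1}\{S^{(\alpha)}_{i_3-1}=0\}$ is not constant; each such term equals $-(2\alpha-1)^2E\bigl[d_{i_1}d_{i_2}\mathbf{1}\{S^{(\alpha)}_{i_3-1}=0\}\bigr]$, there are $O(m^3)$ of them, and bounding each one crudely by $O((i_3-i_2)^{-1/2})$ only yields $O(m^{5/2})$. So this sum needs its own cancellation argument, of the same local-time flavor as the paper's estimate of $A(i_1,i_2,i_3)$, and you have not supplied one.

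The gap is repairable without abandoning your scheme: since $|d_i|\le 2$, Burkholder's inequality gives at once $E|M_k-M_j|^4\le C\,E\bigl[(\sum_{i}d_i^2)^2\bigr]\le 16C(k-j)^2$, which replaces the faulty hand expansion entirely. With that substitution (or with an explicit estimate of the omitted terms), the remaining steps --- the interpolation reduction, the identification of the drift with $(2\alpha-1)L_k$, the stochastic domination of $L_k-L_j$ by $V_{k-j}$ using $|S^{(\alpha)}|\overset{d}{=}|S|$, and the bound $E[V_m^4]\le 4!\bigl(\sum_{a=0}^m u_a\bigr)^4\le Cm^2$ --- are all correct, and the corollary then follows from the resulting moment bound by the same standard criterion the paper invokes.
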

The rest of the paper is organized as follows. In Section~\ref{prelim} we define $\alpha$-skew random walks and review
some of their basic properties. Theorem~\ref{main-thm} and the tightness property for the skew random walks are
proved in Section~\ref{proofs}.
\section{Proof of Theorem~\ref{main-thm}}
\label{proofs}
In this section we complete the proof of our main result, Theorem~\ref{main-thm}.
\label{prelim}
In what follows we will use $S$ to denote the simple symmetric random walk $S^{(\frac{1}{2})}.$
The following observations can be found in \cite{harrisonshepp1}.
\begin{proposition}
\label{prop:samdis}
\item [(a)] $\left|S^{(\alpha)} \right|$ has the same distribution as $\left|S \right|$ on $\zz_+=\{\,0,1,2,\dots\,\}$.
That is, $\left|S^{(\alpha)} \right|$ is a simple symmetric random walk on $\zz_+,$ reflected at 0.
\item [(b)] The processes $-S^{(\alpha)}$ and $S^{(1-\alpha)}$ have the same distribution.
\end{proposition}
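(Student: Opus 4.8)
The plan is to prove both assertions by computing one-step transition probabilities and invoking the fact that, by construction, $S^{(\alpha)}$ is a time-homogeneous Markov chain on $\zz$ started at $0$.

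For part (a) the first task is to show that $\left|S^{(\alpha)}\right|$ is again a Markov chain, this time on $\zz_+$. This is the only genuinely delicate point: although $S^{(\alpha)}$ is Markov, taking absolute values identifies the two states $m$ and $-m$ (for $m>0$), so the projected process could in principle fail to be Markov. The resolution is the symmetry of the dynamics away from the origin. From $S^{(\alpha)}_k=m>0$ the walk steps to $m\pm 1$, so $\left|S^{(\alpha)}_{k+1}\right|=m\pm 1$ each with probability $\frac12$; from $S^{(\alpha)}_k=-m<0$ it steps to $-m\pm 1$, so again $\left|S^{(\alpha)}_{k+1}\right|=m\pm 1$ each with probability $\frac12$. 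Hence the transition law of the absolute value out of the lump $\{m,-m\}$ does not depend on which of the two states is occupied; this is exactly the Kemeny--Snell lumpability condition for the partition $\{0\},\{\pm 1\},\{\pm 2\},\dots$ of $\zz$, and it guarantees that $\left|S^{(\alpha)}\right|$ is Markov.

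With the Markov property in hand I would simply read off the transition probabilities. Out of $0$ the walk is forced to $1$, with probability $\alpha+(1-\alpha)=1$, while out of any $m>0$ it moves to $m\pm 1$ with probability $\frac12$ each, as computed above. These coincide with the transition probabilities of $\left|S\right|$, the simple symmetric random walk reflected at $0$ (from $0$ it goes to $1$ with probability $1$, and from $m>0$ to $m\pm 1$ with probability $\frac12$ each). Since both chains start at $0$ and have the same transition kernel, they have the same distribution, proving (a).

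Part (b) is a direct comparison of transition kernels. Setting $T_k=-S^{(\alpha)}_k$, I would verify that $T$ is a Markov chain started at $0$ whose transitions are obtained from those of $S^{(\alpha)}$ by reflection: out of $0$ it moves to $+1$ with probability $1-\alpha$ and to $-1$ with probability $\alpha$ (the two signs at the origin are swapped), and out of any $m\neq 0$ it moves to $m\pm 1$ with probability $\frac12$ each. These are precisely the defining transition probabilities of $S^{(1-\alpha)}$, so $-S^{(\alpha)}$ and $S^{(1-\alpha)}$ agree in law. The main obstacle throughout is the Markov property in part (a): once lumpability (equivalently, the $\pm$ symmetry of the interior steps) is established, the rest is routine bookkeeping with the transition probabilities given in the definition.
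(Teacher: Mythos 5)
Your proof is correct. Note that the paper itself offers no proof of this proposition --- it is stated as an observation imported from Harrison and Shepp \cite{harrisonshepp1} --- so there is no in-paper argument to compare against; your write-up supplies the missing details. You correctly identify the one nontrivial point, namely that $\bigl|S^{(\alpha)}\bigr|$ is again Markov, and the strong-lumpability check for the partition $\{0\},\{\pm 1\},\{\pm 2\},\dots$ (transitions out of $m$ and $-m$ project to the same law on $\zz_+$ because the walk is symmetric away from the origin, while the two signed exits from $0$ both project to $1$) is exactly the right way to justify it; the rest, including the kernel comparison in part (b), is routine and accurately carried out.
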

The next statement describes $n$-step transition probabilities of the skew random walks
by relating them to those of $S$ (see, for instance, \cite[p.~436]{lejay1}).
\begin{proposition}
\label{prop23}
For $m \in Z$, $k >0$
\beqn
&&P\bigl(S_k^{(\alpha)}=m\bigr)
=\left\{
\begin{array}{lll}
\alpha\cdot P\bigl(|S_k|=m\bigr)&\rm{if}& m>0
\\
(1-\alpha)\cdot P\bigl(|S_k|=-m\bigr)&\rm{if}& m<0
\\
P\bigl(|S_k^{(\alpha)}|=0\bigr)=P\bigl(|S_k|=0\bigr)& \rm{if}& m=0
\end{array}
\right.\nonumber
\feqn
\end{proposition}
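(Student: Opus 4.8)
The plan is to reduce all three cases $m>0$, $m<0$, and $m=0$ to statements about the reflected walk $|S|$ by means of Proposition~\ref{prop:samdis}. The case $m=0$ is immediate, since $\{S_k^{(\alpha)}=0\}=\{|S_k^{(\alpha)}|=0\}$ and part~(a) of Proposition~\ref{prop:samdis} identifies the law of $|S^{(\alpha)}|$ with that of $|S|$; this gives $P(S_k^{(\alpha)}=0)=P(|S_k^{(\alpha)}|=0)=P(|S_k|=0)$ at once. The substance is therefore the case $m>0$, from which $m<0$ will follow by a symmetry argument.

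For $m>0$ I would use the representation of the skew walk underlying Proposition~\ref{prop:samdis}: the process $S^{(\alpha)}$ is built from its absolute value $|S^{(\alpha)}|$ (a reflected simple walk) by attaching to the successive excursions away from $0$ an i.i.d.\ sequence of signs $\varepsilon_1,\varepsilon_2,\dots$ with $P(\varepsilon_j=+1)=\alpha$, drawn independently of $|S^{(\alpha)}|$. Letting $N_k$ denote the label of the excursion straddling time $k$ (defined on $\{|S_k^{(\alpha)}|>0\}$), we have $S_k^{(\alpha)}=\varepsilon_{N_k}\,|S_k^{(\alpha)}|$. Since both $N_k$ and $|S_k^{(\alpha)}|$ are functions of the reflected path alone, the sign $\varepsilon_{N_k}$ remains unbiased after conditioning on them, so that
\[
P\bigl(S_k^{(\alpha)}=m\bigr)=\sum_{j\ge 1}P(\varepsilon_j=+1)\,P\bigl(N_k=j,\,|S_k^{(\alpha)}|=m\bigr)=\alpha\,P\bigl(|S_k^{(\alpha)}|=m\bigr),
\]
and Proposition~\ref{prop:samdis}(a) turns the last expression into $\alpha\,P(|S_k|=m)$.

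An alternative, fully self-contained route that avoids invoking the excursion representation is to compute directly from the one-step transition probabilities. For a trajectory ending at $m>0$, writing $r$ for the number of steps taken from the origin, its skew-walk weight is $\alpha^{u}(1-\alpha)^{d}(1/2)^{k-r}$ with $u+d=r$, while the weight of the associated reflected trajectory is $(1/2)^{k-r}$. Grouping skew trajectories by their absolute-value path $\omega$ and summing over the signs of the completed excursions, each contributing a factor $\alpha+(1-\alpha)=1$ with the final (incomplete) excursion forced to be positive, collapses the sum to $\alpha\,(1/2)^{k-r}$; summing over $\omega$ reproduces $\alpha\,P(|S_k|=m)$.

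Finally, the case $m<0$ follows from Proposition~\ref{prop:samdis}(b): since $-S^{(\alpha)}$ and $S^{(1-\alpha)}$ have the same law, $P(S_k^{(\alpha)}=m)=P(S_k^{(1-\alpha)}=-m)$, and because $-m>0$ the case already treated (with $\alpha$ replaced by $1-\alpha$) gives $(1-\alpha)\,P(|S_k|=-m)$, as claimed. I expect the only delicate point to be the independence assertion in the $m>0$ case, namely that conditioning on the reflected path, which determines both the modulus of the current position and which excursion is in progress, does not bias the sign of that excursion; the combinatorial computation is precisely the device that makes this rigorous directly from the transition kernel.
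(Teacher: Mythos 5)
Your proposal is correct. Note, however, that the paper itself offers no proof of Proposition~\ref{prop23}: it is stated as a known fact with a pointer to \cite[p.~436]{lejay1} (the result goes back to \cite{harrisonshepp1}), so there is no internal argument to compare against. What you supply is essentially the standard derivation that those references contain. The reduction of $m=0$ to Proposition~\ref{prop:samdis}(a) and of $m<0$ to the $m>0$ case via Proposition~\ref{prop:samdis}(b) is clean, and you correctly identify the only delicate point: for $m>0$ one must know that the sign of the excursion in progress at time $k$ is unbiased even after conditioning on the full reflected path. Your excursion-sign argument handles this, but it presupposes that the i.i.d.-signs construction agrees in law with the Markov-chain definition used in the paper, which is itself the content of Harrison and Shepp's theorem; your second, purely combinatorial route closes that circle, since grouping skew trajectories ending at $m>0$ by their reflected image and summing $\alpha^{u}(1-\alpha)^{d}(1/2)^{k-r}$ over admissible sign choices collapses each completed excursion to a factor $\alpha+(1-\alpha)=1$ and leaves exactly one factor $\alpha$ for the final, necessarily positive, excursion. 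That computation is self-contained, uses only the one-step transition probabilities, and is the version I would keep if a proof were to be inserted into the paper.
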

The following observation is evident from the explicit form of
the distribution function of $S_k^{(\alpha)},$ given in Proposition~\ref{prop23}.
\begin{proposition}
\label{lemma1}
With probability one,
\beq
&&E\bigl(S_{j+1}^{(\alpha)}-S_{j}^{(\alpha)}\bigl|S_{j}^{(\alpha)}\bigr)=(2\alpha-1)\textbf{1}_{\{S_{j}^{(\alpha)}=0\}}
\feq
and
\beq
E\bigl[\bigl(S_{i+1}^{(\alpha)}-S_i^{(\alpha)}\bigr)^2\bigl|S_i^{(\alpha)}\bigr]=1,
\feq
\end{proposition}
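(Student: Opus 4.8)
The plan is to read both identities directly off the one-step transition mechanism, using nothing more than the Markov property of $S^{(\alpha)}$ and the fact that every increment takes values in $\{-1,+1\}$. Writing $D_j := S_{j+1}^{(\alpha)}-S_j^{(\alpha)}$, the conditional law of $D_j$ given $S_j^{(\alpha)}$ is, by the Markov property, determined by the current state alone, so both claims reduce to a computation in the two cases $S_j^{(\alpha)}=0$ and $S_j^{(\alpha)}\neq 0$.

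For the first identity, I would condition on the event $\{S_j^{(\alpha)}=m\}$ and treat the boundary and interior cases separately. On $\{S_j^{(\alpha)}=0\}$ the increment equals $+1$ with probability $\alpha$ and $-1$ with probability $1-\alpha$, so its conditional mean is $\alpha-(1-\alpha)=2\alpha-1$; on $\{S_j^{(\alpha)}=m\}$ with $m\neq 0$ the increment is symmetric, equal to $\pm 1$ with probability $1/2$ each, and hence has conditional mean $0$. Combining the two cases yields $E\bigl(D_j\bigl|S_j^{(\alpha)}\bigr)=(2\alpha-1)\mathbf{1}_{\{S_j^{(\alpha)}=0\}}$, the almost-sure qualifier being the usual statement that conditional expectations are defined only up to null sets. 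For the second identity, the key observation is that $D_j^2\equiv 1$ holds pointwise, since $D_j\in\{-1,+1\}$ irrespective of the position; hence $E\bigl[D_j^2\bigl|S_i^{(\alpha)}\bigr]=E\bigl[1\bigl|S_i^{(\alpha)}\bigr]=1$ in every case, with no dependence on $\alpha$.

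I do not expect a genuine obstacle here, as the statement is an immediate consequence of the transition probabilities; the only point deserving a word of care is the boundary case $m=0$, where the asymmetric probabilities $\alpha$ and $1-\alpha$, rather than $1/2$, are what produce the drift term $2\alpha-1$. This localized drift at the origin is precisely the feature that breaks the independence of increments and that will later have to be controlled in the fourth-moment estimate behind Theorem~\ref{main-thm}.
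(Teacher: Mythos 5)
Your proof is correct and matches what the paper intends: the paper states this proposition without any written proof, calling it ``evident,'' and your two-case computation from the one-step transition probabilities --- asymmetric $\alpha$ versus $1-\alpha$ at the origin giving the conditional mean $2\alpha-1$, symmetric $\tfrac12$ elsewhere giving mean $0$, and the pointwise identity $(S_{j+1}^{(\alpha)}-S_j^{(\alpha)})^2\equiv 1$ for the second claim --- is exactly the direct verification one would supply. Nothing further is needed.
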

$\mbox{}$
\\
To show the result of Theorem~\ref{main-thm}, we will need a corollary to Karamata's Tauberian theorem,
which we are going now to state. For a measure $\mu$ on $[0,\infty),$ denote by
$\widehat{\mu}(\lambda):=\int_{0}^{\infty}e^{-{\lambda}x}\,\mu(dx)$ the Laplace transform of $\mu.$
The transform is well-defined for $\lambda \in (c,\infty),$ where $c>0$ is
a non-negative constant, possibly $+\infty.$  If $\mu$ and $\nu$ are measures on $[0,\infty)$ such that $\widehat{\mu}(\lambda)$
and $\widehat{\nu}(\lambda)$ both exist for all $\lambda >0$, then the convolution
$\gamma={\mu} \ast {\nu}$ has the Laplace transform $\widehat{\gamma}(\lambda)=
\widehat{\mu}(\lambda)\widehat{\nu}(\lambda)$ for $\lambda >0.$
If $\mu$ is a discrete measure concentrated on $\zz_+,$ one can identify $\mu$ with a
sequence $\mu_n$ of its values on $n\in\zz_+.$ For such discrete measures, we have
(see, e.g., Corollary~8.10 in \cite[p.~118]{bhattway}).
\begin{proposition}
\label{consequence}
Let $\widetilde{\mu}(t)=\sum_{n=0}^{\infty}\mu_{n}t^{n}, 0 \leq t <1$,
where $\{\,\mu_{n}\,\}_{n=0}^{\infty}$ is a sequence of non-negative numbers. For $L$ slowly varying at
infinity and $0 \leq \theta < \infty$ one has
\beq
&&\widetilde{\mu}(t)\sim (1-t)^{-\theta}L\Bigl(\frac{1}{1-t}\Bigr)~\mbox{\rm as}~t \uparrow 1
\feq
if and only if
\beq
&&{\sum_{j=0}^{n}\mu_{j}}\sim {\frac{1}{\Gamma(\theta)}}n^{\theta}L(n)~\mbox{\rm as}~ n \to {\infty}.
\feq
\end{proposition}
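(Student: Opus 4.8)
The plan is to derive this power-series equivalence from the classical Karamata Tauberian theorem for Laplace--Stieltjes transforms via the substitution $t=e^{-\lambda}$, which converts a generating function into a Laplace transform. First I would attach to the sequence $\{\mu_n\}_{n\ge 0}$ the non-decreasing, right-continuous step function $U(x)=\sum_{j=0}^{[x]}\mu_j$, which is the distribution function of the discrete measure $\mu=\sum_{n\ge 0}\mu_n\delta_n$ on $[0,\infty)$; note that $U(n)=\sum_{j=0}^n\mu_j$ is precisely the partial sum in the statement. With this identification the generating function is a Laplace--Stieltjes transform, since for $\lambda>0$ we have $\widetilde{\mu}(e^{-\lambda})=\sum_{n\ge 0}\mu_n e^{-\lambda n}=\int_{[0,\infty)}e^{-\lambda x}\,dU(x)=\widehat{U}(\lambda)$, and the series converges for every $\lambda>0$ under either hypothesis.

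Next I would match the two limiting regimes. Under $t=e^{-\lambda}$ one has $t\uparrow 1$ exactly when $\lambda\downarrow 0$, and $1-e^{-\lambda}\sim\lambda$, so $1/(1-t)\sim 1/\lambda$; since the ratio of the arguments tends to $1$, slow variation of $L$ (via the uniform convergence theorem for slowly varying functions) gives $L(1/(1-t))\sim L(1/\lambda)$, while $(1-t)^{-\theta}\sim\lambda^{-\theta}$. Consequently the hypothesis $\widetilde{\mu}(t)\sim(1-t)^{-\theta}L(1/(1-t))$ as $t\uparrow 1$ is equivalent to the transform asymptotic $\widehat{U}(\lambda)\sim\lambda^{-\theta}L(1/\lambda)$ as $\lambda\downarrow 0$. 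I would then apply Karamata's Tauberian theorem in its continuous form: for non-decreasing $U$ with $U(0^-)=0$, and $0\le\theta<\infty$ with $L$ slowly varying, $\widehat{U}(\lambda)\sim\lambda^{-\theta}L(1/\lambda)$ as $\lambda\downarrow 0$ holds if and only if $U(x)\sim x^{\theta}L(x)/\Gamma(\theta+1)$ as $x\to\infty$. Restricting the real variable to integers $x=n$ turns the latter into the partial-sum asymptotic $\sum_{j=0}^n\mu_j\sim n^{\theta}L(n)/\Gamma(\theta+1)$ (the constant being the standard Karamata normalization), and, because the continuous theorem is itself an equivalence, both implications of the proposition are obtained at once from this single reduction.

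The principal obstacle lies entirely in the Tauberian (transform-to-tail) half of the continuous theorem, which is the only place the hypothesis that the $\mu_n$ are non-negative --- equivalently, that $U$ is monotone --- is used; it is exactly this side condition that rescues the implication, which is false without it. The Abelian converse is routine: substituting $U(x)\sim x^{\theta}L(x)/\Gamma(\theta+1)$ into $\int_{[0,\infty)}e^{-\lambda x}\,dU(x)$ and computing by dominated convergence together with Karamata's representation of $L$ reproduces $\widehat{U}(\lambda)\sim\lambda^{-\theta}L(1/\lambda)$. The real content of the Tauberian direction is Karamata's device of approximating the test function $\mathbf{1}_{[0,1]}$ uniformly by polynomials in $e^{-x}$ so as to transfer the known asymptotics of $\widehat{U}$ to $U$ itself; this I would invoke in the cited form rather than reprove, since it is the standard ingredient behind Corollary~8.10 of \cite{bhattway} quoted in the statement.
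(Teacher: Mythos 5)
Your reduction is correct, but note first that the paper contains no proof of this proposition to compare against: it is quoted verbatim (as Corollary~8.10 of \cite{bhattway}) and used as a black box. What you supply is precisely the standard derivation of the power-series corollary from the measure form of Karamata's theorem: identifying $\widetilde{\mu}(e^{-\lambda})$ with the Laplace--Stieltjes transform of the non-decreasing step function $U(x)=\sum_{j=0}^{[x]}\mu_j$, matching the regimes $t\uparrow 1$ and $\lambda\downarrow 0$ through $1-e^{-\lambda}\sim\lambda$ together with the uniform convergence theorem for slowly varying functions, and invoking the continuous equivalence with monotonicity of $U$ as the Tauberian side condition. Each of these steps is sound, including the passage from real $x$ to integer $n$, which works because $x^{\theta}L(x)$ is regularly varying and $U$ is constant on $[n,n+1)$; so both directions do follow at once from the continuous theorem, as you claim.

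One point deserves explicit flagging: your argument yields the normalization $\sum_{j=0}^{n}\mu_j\sim n^{\theta}L(n)/\Gamma(\theta+1)$, whereas the proposition as printed in the paper has $1/\Gamma(\theta)$. These differ by a factor of $\theta$, and the printed constant cannot be right: at $\theta=0$ it degenerates to $1/\Gamma(0)=0$, and for the paper's own application with $\theta=2$ one computes directly from $\widetilde{\nu}(t)=(1-t^{2})^{-2}=\sum_{j\geq 0}(j+1)t^{2j}$ that $\sum_{j=0}^{m}\nu(j)\sim m^{2}/8$, which matches $\frac{1}{\Gamma(3)}m^{2}\cdot\frac{1}{4}$ and not $\frac{1}{\Gamma(2)}m^{2}\cdot\frac{1}{4}$. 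So the paper's $\Gamma(\theta)$ is a misprint for $\Gamma(\theta+1)$ (your constant agrees with the standard statements in Feller and in \cite{bhattway}), and you have in effect proved the corrected statement rather than the printed one. This is immaterial downstream, since the paper uses the proposition only to obtain bounds of order $m$ and $m^{2}$ in Proposition~\ref{key-prop}, where multiplicative constants are absorbed anyway.
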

Here and henceforth, $a_n\sim b_n$ for two sequence of real numbers $\{a_n\}_{n\in \nn}$ and $\{b_n\}_{n\in \nn}$
means $\lim_{n\to\infty} a_n/b_n=1.$
\par
We are now in a position to prove the following key proposition. Define a sequence $\{q(k)\}_{k\in\zz_+}$ as follows
\beq
g(k) =
\left\{
\begin{array}{lll}
0 &\mbox{\rm if}&~ k\in\nn~\mbox{\rm is odd} \\
\\
\binom{2i}{i}2^{-2i}&\mbox{\rm if}&~ k=2i\in\nn~\mbox{\rm is even}.
\end{array}
\right.
\feq
Note that in view of Proposition~\ref{prop23},
\beq
g(k)=P\bigl(S_k=0\bigr)=P\bigl(|S_k|=0\bigr)=P\bigl(|S_k^{(\alpha)}|=0\bigr)=P\bigl(S_k^{(\alpha)}=0\bigr).
\feq
\begin{proposition}
\label{key-prop}
\item[(a)] If $\mu(j)=g \ast g(j)$ then $\sum_{j=0}^m\mu(j) \sim m$
\item[(b)] If $\nu(j)=g \ast g \ast g \ast g(j)$ then $\sum_{j=0}^m\nu(j) \sim m^{2}$.
\end{proposition}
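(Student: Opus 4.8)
The plan is to compute the ordinary generating function of the sequence $g$, form the relevant products, and read off the asymptotics of the partial sums from the behaviour near $t=1$ via Proposition~\ref{consequence}. Since $\mu=g\ast g$ and $\nu=g\ast g\ast g\ast g$, the generating functions of $\mu$ and $\nu$ are simply the square and the fourth power of $\widetilde{g}(t):=\sum_{k=0}^{\infty}g(k)t^{k}$, so the whole problem reduces to understanding $\widetilde{g}$ and its powers.

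First I would put $\widetilde{g}$ in closed form. Because $g$ is supported on even indices with $g(2i)=\binom{2i}{i}2^{-2i}$, the classical central-binomial identity $\sum_{i\ge 0}\binom{2i}{i}x^{i}=(1-4x)^{-1/2}$, applied with $x=t^{2}/4$, gives $\widetilde{g}(t)=(1-t^{2})^{-1/2}$ for $0\le t<1$. Consequently $\widetilde{\mu}(t)=\widetilde{g}(t)^{2}=(1-t^{2})^{-1}$ and $\widetilde{\nu}(t)=\widetilde{g}(t)^{4}=(1-t^{2})^{-2}$, both of which are valid power series with nonnegative coefficients, as required by Proposition~\ref{consequence}.

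Next I would extract the singular behaviour as $t\uparrow 1$. Writing $\widetilde{\mu}(t)=(1-t)^{-1}(1+t)^{-1}$, the factor $(1+t)^{-1}$ tends to $1/2$; viewed as a function of $x=1/(1-t)$ it equals $(2-1/x)^{-1}$, which converges to the positive constant $1/2$ and is therefore slowly varying at infinity. Hence Proposition~\ref{consequence} applies with $\theta=1$ and $L\to 1/2$, yielding $\sum_{j=0}^{m}\mu(j)\sim \tfrac{1}{\Gamma(1)}\,\tfrac12\,m=\tfrac12 m$, which is of order $m$. In exactly the same way $\widetilde{\nu}(t)=(1-t)^{-2}(1+t)^{-2}$ with $(1+t)^{-2}\to 1/4$ slowly varying, so $\theta=2$, $L\to 1/4$, and $\sum_{j=0}^{m}\nu(j)\sim \tfrac{1}{\Gamma(2)}\,\tfrac14\,m^{2}=\tfrac14 m^{2}$, of order $m^{2}$. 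This settles (a) and (b).

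The only genuinely substantive step is the closed-form evaluation $\widetilde{g}(t)=(1-t^{2})^{-1/2}$; once that is in hand, checking that the prefactors $(1+t)^{-1}$ and $(1+t)^{-2}$ are slowly varying (they converge to positive constants) and identifying $\theta$ is routine. As a cross-check for (a) one can bypass the Tauberian theorem entirely: the Vandermonde-type identity $\sum_{i=0}^{n}\binom{2i}{i}\binom{2(n-i)}{n-i}=4^{n}$ shows $\mu(2n)=2^{-2n}\cdot 4^{n}=1$ and $\mu(2n+1)=0$, so $\sum_{j=0}^{m}\mu(j)=\lfloor m/2\rfloor+1\sim \tfrac12 m$ exactly, confirming the generating-function computation. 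The fourfold convolution in (b) admits no such clean closed form, so the generating-function route is the natural one to apply uniformly.
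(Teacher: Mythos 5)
Your argument is essentially the paper's own proof: both evaluate $\widetilde g(t)=(1-t^{2})^{-1/2}$ in closed form (the paper via $g(2j)=(-1)^{j}\binom{-1/2}{j}$, you via the central-binomial series — the same identity), take the square and fourth power, and invoke Proposition~\ref{consequence} with $\theta=1$ and $\theta=2$. You are in fact slightly more careful than the paper: the paper applies Proposition~\ref{consequence} with $L=1$, ignoring the factor $(1+t)^{-\theta}\to 2^{-\theta}$, whereas your constants $\tfrac12 m$ and $\tfrac14 m^{2}$ (confirmed exactly by your Vandermonde cross-check $\mu(2n)=1$) are the correct ones under the paper's strict definition of $\sim$; the discrepancy is a harmless constant that is absorbed into $C_1,C_2$ in the subsequent lemma.
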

\begin{proof}
For $t\in (0,1),$ let $\widetilde g(t)=\sum_{k=0}^{\infty}g(k)t^k.$ Notice that $\widetilde g(t)$ is well-defined
since $g(k)=P\bigl(S_k=0\bigr)<1$  for $k\geq 0.$ Since $g(2j)=\binom{2j}{j}2^{-2j}=(-1)^j\binom{-\frac{1}{2}}{j},$ we have
\beq
\widetilde g(t)&=&\sum_{k=0}^{\infty}g(k)t^k=\sum_{j=0}^{\infty}\binom{2j}{j}2^{-2j}t^{2j}=
\sum_{j=0}^{\infty}(-1)^{j}\binom{-\frac{1}{2}}{j}t^{2j}\\
&=&\sum_{j=0}^{\infty}\binom{-\frac{1}{2}}{j}(-t^{2})^{j}=(1-t^{2})^{-\frac{1}{2}}.
\feq
Notice that, using the notation of Proposition~\ref{consequence}, $\widetilde g(t)=\widehat g(\lambda)$ if
$t=e^{-\lambda}.$  Therefore, $\widetilde \mu(t)= {\widetilde g}^2(t)=(1-t^2)^{-1}$
while $\widetilde \nu(t)={\widetilde g}^4(t)=(1-t^2)^{-2}.$
Thus claims (a) and (b) of the proposition follow from Proposition~\ref{consequence} applied, respectively,
with $\theta=1,$ $L=1$ for $\mu$ and with $\theta=2,$ $L=1$ for $\nu.$
\end{proof}
The last technical lemma we need is the following claim.
\begin{lemma}
For integers $0<i_1<i_2<i_3<i_4$ define
\beq
A(i_1,i_2,i_3):=E(S_{{i_{3}}+1}^{(\alpha)}-S_{i_{3}}^{(\alpha)})^{2}
(S_{i_{2}+1}^{(\alpha)}-S_{i_{2}}^{(\alpha)})
(S_{i_{1}+1}^{(\alpha)}-S_{i_{1}}^{(\alpha)}),
\feq
and
\beq
B(i_1,i_2,i_3,i_4):=E(S_{{i_4}+1}^{(\alpha)}-S_{i_{4}}^{(\alpha)})
(S_{{i_3}+1}^{(\alpha)}-S_{i_3}^{(\alpha)})
(S_{i_2+1}^{(\alpha)}-S_{i_2}^{(\alpha)})
(S_{i_1+1}^{(\alpha)}-S_{i_1}^{(\alpha)}).
\feq
Then there is a constant $C>0$ such that
\beq
\sum_{1 \leq i_1 < i_2 < i_3 \leq k-j}
A(i_1,i_2,i_3) \leq C |k-j|^2,
\feq
and
\beq
\sum_{1 \leq i_{1} < i_{2} < i_{3} < i_{4}\leq k-j}
B(i_1,i_2,i_3,i_4)\leq C |k-j|^2.
\feq
\end{lemma}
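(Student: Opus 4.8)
The plan is to reduce both displayed bounds to two pointwise estimates on the mixed moments and then sum those with convolution (Karamata) estimates of the type in Proposition~\ref{key-prop}. Throughout write $\Delta_i := S_{i+1}^{(\alpha)}-S_i^{(\alpha)}$, $c := 2\alpha-1$, let $\mathcal{F}_i$ denote the natural filtration, and set $N := k-j$. First I would dispose of the $A$-sum by the trivial remark that $\Delta_{i_3}^2 = 1$ almost surely, so $A(i_1,i_2,i_3) = E[\Delta_{i_2}\Delta_{i_1}]$ is independent of $i_3$. Summing the free index over $i_2 < i_3 \le N$ contributes a factor at most $N$, so the first inequality reduces to the one-dimensional estimate $\sum_{1\le i_1<i_2\le N}\bigl|E[\Delta_{i_2}\Delta_{i_1}]\bigr| \le C N$.

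The heart of the matter is a sharp bound on this pair correlation. Conditioning on $\mathcal{F}_{i_2}$ and invoking Proposition~\ref{lemma1} gives $E[\Delta_{i_2}\Delta_{i_1}] = c\,E[\Delta_{i_1}\mathbf{1}_{\{S_{i_2}^{(\alpha)}=0\}}]$. Writing $p_m(x) := P(S_m^{(\alpha)}=0\mid S_0^{(\alpha)}=x)$, I would first record a clean consequence of Propositions~\ref{prop:samdis} and \ref{prop23}: to reach $0$ one must first hit $0$, and while the walk stays on one side of $0$ its first-passage law is that of the simple symmetric walk, after which $p$ reduces to the $\alpha$-independent return probability $g$; hence $p_m(x)=P(S_m=x)$, which is in particular even and unimodal in $x$. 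Conditioning on $\mathcal{F}_{i_1}$ and combining this with the explicit weights $P(S_{i_1}^{(\alpha)}=y)=\alpha\,P(|S_{i_1}|=y)$ and $P(S_{i_1}^{(\alpha)}=-y)=(1-\alpha)P(|S_{i_1}|=y)$ of Proposition~\ref{prop23} produces the identity
\[
E[\Delta_{i_2}\Delta_{i_1}]
= c^2\, g(i_1)\, g(i_2-i_1)
+ \tfrac{c^2}{2}\sum_{y\ge 1} P\bigl(|S_{i_1}|=y\bigr)
\bigl(p_{i_2-i_1-1}(y+1)-p_{i_2-i_1-1}(y-1)\bigr).
\]
The leading term is already of product form; for the correction I would use that $\bar g(m):=(1+m)^{-1/2}$ dominates, up to a constant, each of $g(m)$, $\sup_y P(|S_m|=y)$, and the total variation $\sum_{y\ge1}|p_m(y+1)-p_m(y-1)|=p_m(0)+p_m(1)$ (here unimodality makes the telescoping exact), all by the local central limit theorem. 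This bounds the whole correlation by $C\,\bar g(i_1)\bar g(i_2-i_1)$. I would flag an explicit remark here: the naive bound by $g(i_1)g(i_2-i_1)$ alone is \emph{false} on parity grounds — e.g. $E[\Delta_2\Delta_1]=-c^2/2\ne0$ while $g(1)=0$ — so the envelope $\bar g$, not $g$ itself, is genuinely required.

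For the fourth-order term $B$ I would run the same conditioning from the top index downward. Conditioning on $\mathcal{F}_{i_4}$ collapses $\Delta_{i_4}$ to $c\,\mathbf{1}_{\{S_{i_4}^{(\alpha)}=0\}}$: every term carrying the martingale part $\Delta_{i_4}-c\,\mathbf{1}_{\{S_{i_4}^{(\alpha)}=0\}}$ vanishes, since it has conditional mean zero while all remaining factors are $\mathcal{F}_{i_4}$-measurable. Peeling the indices $i_4,i_3,i_2$ in turn, each step either inserts a clean indicator $\mathbf{1}_{\{S_{i_\ell}^{(\alpha)}=0\}}$ — which, chained through the Markov property, contributes a return factor $g(i_{\ell+1}-i_\ell)$ — or a discrete-gradient correction of the type met above, dominated by $\bar g$ of the corresponding gap. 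Collecting the resulting hierarchy gives the target bound $|B(i_1,i_2,i_3,i_4)|\le C\,\bar g(i_1)\bar g(i_2-i_1)\bar g(i_3-i_2)\bar g(i_4-i_3)$. Keeping every correction term uniformly dominated by a product of gap-envelopes, across all orderings and parities, is the step I expect to be the main obstacle.

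Finally I would sum. Since $\bar g(m)\asymp m^{-1/2}$, its generating function obeys $\widetilde{\bar g}(t)\sim\sqrt{\pi}\,(1-t)^{-1/2}$ as $t\uparrow1$, so $\widetilde{\bar g}(t)^2\sim\pi(1-t)^{-1}$ and $\widetilde{\bar g}(t)^4\sim\pi^2(1-t)^{-2}$; Proposition~\ref{consequence} (applied exactly as in the proof of Proposition~\ref{key-prop}, now to the envelope) then yields $\sum_{m\le N}(\bar g\ast\bar g)(m)=O(N)$ and $\sum_{m\le N}(\bar g\ast\bar g\ast\bar g\ast\bar g)(m)=O(N^2)$. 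Passing to gap variables $j_1=i_1,\ j_2=i_2-i_1,\dots$, the pair-correlation sum is dominated by $\sum_{m\le N}(\bar g\ast\bar g)(m)=O(N)$, which with the $A$-reduction gives the first inequality, and the $B$-sum is dominated by $\sum_{m\le N}(\bar g\ast\bar g\ast\bar g\ast\bar g)(m)=O(N^2)$, giving the second.
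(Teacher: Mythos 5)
Your diagnosis of the paper's pointwise identity is correct, and it is worth stating plainly: the paper's proof inserts the indicators $\mathbf{1}_{\{S_{i_1}^{(\alpha)}=0\}}$ and $\mathbf{1}_{\{S_{i_2}^{(\alpha)}=0\}}$ as if the contribution from $\{S_{i_1}^{(\alpha)}\ne 0\}$ vanished, but from a nonzero site the conditional return probability $p_m(x\pm 1)$ is not symmetric in the sign of the step, and your example $E[\Delta_2\Delta_1]=-(2\alpha-1)^2/2$, versus $(2\alpha-1)^2g(1)g(1)=0$, refutes the claimed product formula. Your repair of the second-order term is complete and correct: the decomposition of $E[\Delta_{i_2}\Delta_{i_1}]$ into $c^2g(i_1)g(i_2-i_1)$ plus the gradient correction follows from $p_m(x)=P(S_m=x)$, evenness, and the weights of Proposition~\ref{prop23}; the telescoping identity $\sum_{y\ge 1}|p_m(y+1)-p_m(y-1)|=p_m(0)+p_m(1)=O((1+m)^{-1/2})$ is valid by unimodality; and together with $\sup_y P(|S_{i_1}|=y)=O((1+i_1)^{-1/2})$ and the boundedness of $(\bar g\ast\bar g)(m)$ this gives $\sum_{i_1<i_2\le N}|E[\Delta_{i_2}\Delta_{i_1}]|=O(N)$, hence the $A$-bound, since $\Delta_{i_3}^2\equiv 1$ frees the index $i_3$. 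This genuinely fixes the $A$ part of the lemma, which the paper's own argument does not establish.

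The fourth-order bound, however, is not proved. After peeling $i_4$ and $i_3$ you are left with $c\,E[\Delta_{i_2}\Delta_{i_1}G_3(S_{i_3}^{(\alpha)})]$ where $\|G_3\|_\infty\le C\bar g(i_4-i_3)$, and peeling $i_2$ requires estimating, on $\{S_{i_2}^{(\alpha)}=x\ne 0\}$, the discrete gradient $\tfrac12\bigl[(Q^{m}G_3)(x+1)-(Q^{m}G_3)(x-1)\bigr]$ with $m=i_3-i_2-1$ and $Q$ the one-step skew kernel. To win the factor $\bar g(i_3-i_2)$ you need a gradient (equivalently, total-variation) estimate for the $m$-step transition kernel of the \emph{skew} walk, uniform in $x$ and valid for arbitrary bounded test functions:
\[
\sup_x\sum_y\Bigl|P\bigl(S_m^{(\alpha)}=y\mid S_0^{(\alpha)}=x+1\bigr)-P\bigl(S_m^{(\alpha)}=y\mid S_0^{(\alpha)}=x-1\bigr)\Bigr|\le C(1+m)^{-1/2}.
\]
This is precisely the step you flag as ``the main obstacle'' and then assume. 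It does not follow from the return-probability bound used for $A$ (there the test function was the point mass at $0$; here $G_3$ is an arbitrary bounded function produced by the earlier peelings), and it needs a separate argument, e.g.\ the first-passage decomposition of the skew kernel combined with the hitting-time theorem, or a coupling of two skew walks started at $x\pm 1$. Without it the trivial bound $\|Q^mG_3\|_\infty\le\|G_3\|_\infty$ loses the factor $\bar g(i_3-i_2)$, and the resulting envelope $\bar g(i_1)\bar g(i_2-i_1)\bar g(i_4-i_3)$ only gives a $B$-sum of order $N^{5/2}$, not $N^2$. So the proposal correctly locates, and partially repairs, a real flaw in the paper's argument, but the second displayed inequality of the lemma remains unproven as written.
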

\begin{proof}
Using Proposition~\ref{lemma1}, the Markov property, and the fact the excursions of $S^{(\alpha)}$ away from
zero are the same as excursions of the simple symmetric random walk $S,$ we obtain
\beq
&&A(i_1,i_2,i_3)
\\
&&
\qquad
=
E(S_{{i_{3}}+1}^{(\alpha)}-S_{i_{3}}^{(\alpha)})^{2}
(S_{i_{2}+1}^{(\alpha)}-S_{i_{2}}^{(\alpha)})
(S_{i_{1}+1}^{(\alpha)}-S_{i_{1}}^{(\alpha)}\textbf{1}_{\{S_{i_1}^{(\alpha)}=0\}}\textbf{1}_{\{S_{i_2}^{(\alpha)}=0\}}\bigr)
\\
&&
\qquad
=
P\bigl(S_{i_1}=0\bigr)\cdot (2\alpha-1)\cdot P\bigl(S_{i_2}=0|S_{i_1}=0\bigr)\cdot (2\alpha-1)
\\
&&
\qquad
=
(2\alpha-1)^2 g(i_1)g(i_2-i_1).
\feq
Therefore,
\beq
\sum_{1 \leq i_{1} < i_{2} < i_{3} <\leq k-j}
A(i_1,i_2,i_3)\leq\sum_{i_3=0}^{[k-j]}\sum_{i_2=0}^{i_3-1}\sum_{i_1=0}^{i_2-1} g(i_{2}-i_{1})g(i_{1}).
\feq
Using Proposition~\ref{key-prop}, we obtain
\beq
&&\sum_{i_{3}=0}^{[k-j]}\sum_{i_{2}=0}^{i_{3}-1}\sum_{i_{1}=0}
^{i_{2}-1}g(i_{2}-i_{1})g(i_{1})=\sum_{i_{3}=0}^{[k-j]}\sum_{i_{2}=0}^{i_{3}-1}
g\ast g(i_{2})
\leq \sum_{i_{3}=0}^{[k-j]}\sum_{i_{2}=0}^{[k-j]}
g\ast g(i_{2})
\\
&&
\qquad
\leq C_1 \left|k-j\right|^2,
\feq
for some constant $C_1>0$ and any $k,j\in\nn.$
\par
Similarly,
\beq
&&B(i_1,i_2,i_3,i_4)=
(2\alpha-1)^4\cdot P\bigl(S_{i_1}=0\bigr) \cdot \prod_{a=1}^3 P\bigl(S_{i_a+1}=0|S_{i_a}=0\bigr)
\\
&&
\qquad
=
(2\alpha-1)^4 g(i_1)g(i_2-i_1)g(i_3-i_2)g(i_4-i_3).
\feq
Hence, using again Proposition~\ref{key-prop},
\beq
\sum_{0 \leq i_1 < i_2 < i_3 < i_4} B(i_1,i_2,i_3,i_4) \leq \sum_{i_4=0}^{[k-j]}g \ast g \ast g \ast g(i_{4})
\leq C_2 |k-j|^2,
\feq
for some constant $C_2>0$ and any $k,j\in\nn.$
\end{proof}
$\mbox{}$
\\
We are now in a position to complete the proof of our main result.
\\
$\mbox{}$
\\
{\em Completion of the proof of Theorem~\ref{main-thm}.}
\\
First consider the case where $s=\frac{j}{n}<\frac{k}{n}=t$
are grid points. Then
\beq
&&
E\Bigl|\frac{S_{[nt]}^{(\alpha)}}{\sqrt{n}}-\frac{S_{[ns]}^{(\alpha)}}
{\sqrt{n}}\Bigr|^4=\frac{1}{n^2}E\Bigl|S_k^{(\alpha)}-S_j^{(\alpha)}\Bigr|^4
=\frac{1}{n^{2}}E\Bigl|\sum_{i=j}^{k-1}\bigl(S_{i+1}^{(\alpha)}-S_i^{(\alpha)}\bigl)\Bigr|^{4}
\\
&&
\quad
=\frac{1}{n^2}\sum_{i=j}^{k-1}E\bigl(S_{i+1}^{(\alpha)}-S_i^{(\alpha)}\bigr)^4
+\frac{1}{n^2}\sum_{ i_{1} < i_{2} \leq k-j} E\bigl(S_{i_{1}+1}^{(\alpha)}-S_{i_{1}}^{(\alpha)}\bigr)^2
\bigl(S_{i_2+1}^{(\alpha)}-S_{i_2}^{(\alpha)}\bigr)^2
\\
&&
\qquad
+\frac{1}{n^{2}}\sum_{ i_{1} < i_{2} < i_{3} \leq k-j}
E\bigl(S_{i_{3}+1}^{(\alpha)}-S_{i_{3}}^{(\alpha)})^{2}
\bigl(S_{i_{2}+1}^{(\alpha)}-S_{i_{2}}^{(\alpha)}\bigr)\bigl(S_{i_{1}+1}^{(\alpha)}-S_{i_{1}}^{(\alpha)}\bigr)
\\
&&
\qquad
+\frac{1}{n^{2}}\sum_{ i_{1} < i_{2} < i_{3} < i_{4} \leq k-j}
\prod_{a=1}^4 E\bigl(S_{i_a+1}^{(\alpha)}-S_{i_a}^{(\alpha)}\bigr)
\\
&&
\quad
\leq
\frac{1}{n^{2}}\sum_{i=j}^{k-1}{1}+\frac{1}{n^2}\binom{k-j}{2}\binom{k-j}{2}+\frac{1}{n^2}C_1
\bigl|k-j\bigr|^2+\frac{1}{n^{2}}C_2\bigl|k-j\bigr|^{2}
\\
&&
\quad
\leq C_3\bigl|t-s\bigr|^2,
\feq
for a large enough constant $C_3>0.$
\par
To conclude the proof of Theorem~\ref{main-thm},
it remains to observe that for non-grid points $s$ and $t$ one can use an approximation by neighbor grid points. 
In fact, the approximation argument given in \cite[pp.~100-101]{bhattwaymire} for regular random walks goes through verbatim.
\qed

\section*{Acknowledgment}
I would like to thank Professor Edward C. Waymire for suggesting this problem and for helpful comments.
I also want to thank Professor Alexander Roitershtein for helpful suggestions and corrections.
\bibliographystyle{amsplain}

\end{document}